\numberwithin{equation}{section}
\theoremstyle{plain}
\newtheorem{thm}{Theorem}[section]
\newtheorem{lem}[thm]{Lemma}
\newtheorem{rmkk}[thm]{Remark}
\newcommand{\enter}{\bigskip}
\begin{document}
 \author{Prasanta Kumar Barik\footnote{{\it{${}$ Email address: }}pbarik@ma.iitr.ac.in/prasant.daonly01@gmail.com}\\
\footnotesize  Department of Mathematics,\\ \small {Indian Institute of Technology Roorkee,}\\ \small{ Roorkee-247667, Uttarakhand, India}
  }
\title {Existence of mass-conserving weak solutions to the singular coagulation equation with multiple fragmentation }

\maketitle

\hrule \vskip 8pt

\begin{quote}
{\small {\em\bf Abstract.} In this paper we study the continuous coagulation and multiple fragmentation equation for the mean-field description of a system of particles taking into account the combined effect of the coagulation and the fragmentation processes in which a system of particles growing by successive mergers to form a bigger one and a larger particle splits into a finite number of smaller pieces. We demonstrate the global existence of mass-conserving weak solutions for a wide class of coagulation rate, selection rate and breakage function. Here, both the breakage function and the coagulation rate may have algebraic singularity on both the coordinate axes. The proof of the existence result is based on a weak $L^1$ compactness method for two different suitable approximations to the original problem, i.e. the conservative and non-conservative approximations. Moreover, the mass-conservation property of solutions is established for both approximations.

   \enter
}

\end{quote}

{\bf Keywords:} Mass conserving solution; Existence; Weak compactness\\
\hspace{.5cm}
{\bf MSC (2010):} Primary: 45J05, 45K05; Secondary: 34A34, 45G10.

\vskip 10pt \hrule

\section{Introduction}\label{existintroduction1}
We investigate the existence of mass-conserving weak solutions to the continuous coagulation and multiple fragmentation equation. We first recall that the coagulation and multiple fragmentation equation (CFME) provides a mean-field description of a system of particles growing by successive mergers to form a larger one and a bigger particle splits into daughter particles. Each particle is fully identified by its volume (or size) $y \in \mathds{R}_{>0}$. Denoting by $g(t, y) \ge 0$, the concentration of particles of volume $y \in \mathds{R}_{>0}$ at time $t \ge 0$, the dynamics of $g$ is given by \cite{Melzak:1957, McLaughlin:1997, Giri:2011Thecontinuous, Giri:2012, Giri:2013, Camejo:2015, Laurencot:2018}
\begin{align}\label{Cmfe}
\frac{\partial g(t, y)}{\partial t}  = &\frac{1}{2} \int_{0}^{y} A(y-z, z)g(t, y-z)g(t, z)dz - \int_{0}^{\infty} A(y, z)g(t, y)g(t, z)dz\nonumber\\
  &+\int_y^{\infty}b(y|z)S(z)g(t, z)dz-S(y)g(t, y),
\end{align}
with the initial value
\begin{align}\label{Initialdata}
g(0, y) = g^{in}(y)\ge 0~ \mbox{a.e.}.
\end{align}
Here the non-negative and symmetric function $A(y, z)$ represents the coagulation rate which describes the rate at which the particles of volume $y$ unite with the particles of volume $z$ to produce the larger particles of volume $y+z$ whereas $b(y|z)$ is the breakage function which gives the contribution to the formation of particles of volume $y$ from the breakage of particles of volume $z$ and the selection rate $S(y)$ represents  the rate at which the particles of volume $y$ is selected to break. In addition, the breakage function is assumed to satisfy the following properties
 \begin{align}\label{TNP}
\int_0^zb(y|z)dy =N(z)\ \forall z \in \mathds{R}_{>0},\ \text{where}\ \underset{z\in \mathds{R}_{>0}}{\sup} N(z)=N <\infty \hspace{.2cm}  \text{and} \hspace{.2cm} b(y|z)=0 \hspace{.1cm} \forall \hspace{.1cm} y \ge z,
\end{align}
and
\begin{align}\label{MCP}
\int_0^zyb(y|z)dy =z,\ \ \ \forall y\in (0, z).
\end{align}
In (\ref{TNP}), $N(z)$ stands for the total number of daughter particles obtained from the breakage of particles of volume $z$ and is assumed its supremum is a finite constant $N \ge 2$. The condition (\ref{MCP}) ensures that the total volume (mass) in the system remains conserved during the fragmentation events.\\

The first term in \eqref{Cmfe} gives the production of particles of volume $y$ after coalescing of the particles of volumes $z$ and, $y-z$ due to the coagulation process whereas the second term shows the disappearance of the particles of volume $y$ after combining with the particles of volume $z$. The third and fourth terms describe the gain and loss of the particles of volume $y$ due to the multiple fragmentation events, respectively.\\

From \eqref{MCP}, it is clear that the total mass is conserved during the fragmentation process. Thus, we expect that the total mass will also conserve during both the coagulation and multiple fragmentation events. However, if the coagulation rate is very high compared to the fragmentation rate, the conservation of mass fails at a finite time due to the appearance of giant particles of the system. This process is called \emph{gelation transition} and the finite time at which this process occurs is known as the \emph{gelation time} \cite{Escobedo:2003, Leyvraz:1981}.\\

 Next, the total mass of the particles for coagulation and multiple fragmentation equation can be defined as
 \begin{align}\label{Totalmass}
\mathcal{M}_1(t)=\mathcal{M}_1(g(t)):=\int_0^{\infty}yg(t, y)dy,  \ \ t \ge 0.
\end{align}

 The well-posedness of weak solutions to the continuous CMFE with unbounded non-singular kernels have been investigated in many articles \cite{Dubovskii:1996, Giri:2011Thecontinuous, Giri:2012, McLaughlin:1997, Melzak:1957} and references therein. However, in \cite{Barik:2018Mass, Camejo:2015, Laurencot:2018, Norris:1999} the existence and uniqueness of solutions to the continuous CMFE with singular coagulation rates have been discussed. In particular, Norris \cite{Norris:1999} has studied the existence and uniqueness of solutions to the continuous SCE locally in time when coagulation kernel satisfy $A(y, z) \le \psi_1(y) \psi_1(z)$, with $\psi_1: (0, \infty) \rightarrow [0, \infty)$ and $\psi_1(ay)\le a\psi_1(y)$  for all $y \in (0, \infty)$, $a \ge 1$, where $\psi_1$ is a sub-linear function and the initial data $g^{in} \in L^1((0, \infty); \psi_1(y)^2)$. Moreover, the solutions satisfy the mass-conservation property for $\epsilon >0$ such that $\epsilon y \le \psi_1(y)$. Later in 2015, Camejo and Warnecke \cite{Camejo:2015} have discussed the existence of weak solutions to the continuous CMFE for the singular coagulation kernel, when the coagulation rate and selection rate, respectively, satisfy the following
 \begin{equation*}
 A_2(y, z) \le k (1+y)^{\lambda}(1+z)^{\lambda}({y}{z})^{-\sigma},\ \text{for}\  \sigma \in [0, 1/2), \lambda -\sigma \in [0,1)\ \text{and}\ k>0,
 \end{equation*}
 and
 \begin{equation*}
S_1(y) \le k' y^{\alpha}\ \text{where}\ \alpha \in (0, 1)\ \text{and}\ k'>0.
 \end{equation*}
 Moreover, they have shown the uniqueness result for this kernel $A_2$ when $\lambda =0$. Recently, Lauren\c{c}ot \cite{Laurencot:2018} has proven very interesting result to show the existence of mass-conserving solutions to the continuous CMFE by considering the breakage function, $b(y|z)=(\nu+2) \frac{y^{\nu}}{z^{1+\nu}}$, provided that $\nu \in (-2, -1]$. By taking on account of this breakage function, one can infer from \eqref{TNP} that an infinite number of particles are produced for $\nu=-1$ and on other hands, for $\nu \in (-2, -1)$, infeasible number of particles are created. Furthermore, a uniqueness result is established for restricted coagulation rate. Later, we have investigated the existence of mass-conserving solutions to the continuous SCE having linear growth for large volumes and singularity for small volume particles whatever the approximations to the original problems, see \cite{Barik:2018Mass}. In addition, we have relaxed the assumption on the initial data as in \cite{Norris:1999} to show the existence of solutions.\\

 Since the general uniqueness result to \eqref{Cmfe}--\eqref{Initialdata} is not available for singular coagulation rate $A$, breakage function, selection rate $S$ and initial data $g^{in}$ satisfying ($\Lambda_1$)--($\Lambda_4$) respectively,  it is not confirmed whether the solution to \eqref{Cmfe}--\eqref{Initialdata} obtained by a non-conservative approximation is mass conserving or not ? In \cite{Filbet:2004II}, Filbet and Lauren\c{c}ot have studied a finite volume scheme to discuss the gelation transition by using a non-conservative truncation. In addition, they have concluded that the loss of mass in the system decrease for a large domain. Hence, it is expected that when the upper limit of the truncated domain goes to infinity, then the mass conservation property holds for a non-conservative truncation. Later, in \cite{Filbet:2004}, they have established a mathematical proof of this numerical observation. A similar type of numerical observation for the coagulation-fragmentation equations (CFEs) by using a finite volume scheme has been discussed by Bourgade and Filbet in \cite{Bourgade:2008}. Recently, in \cite{Barik:2017Anote}, we have shown mathematically that a non-conservative coagulation and conservative fragmentation truncation for CFEs also gives the mass conserving solutions for certain classes of nonsingular unbounded coagulation and fragmentation kernels. The main novelty of the present work is to generalize the previous existing results in \cite{Barik:2017Anote, Barik:2018Mass, Camejo:2015}. In one hand, we have extended our previous work in \cite{Barik:2017Anote, Barik:2018Mass} to the continuous CMFE by using both conservative and non-conservative approximations. On the other hand, we have constructed a mass-conserving solution to the CMFE \eqref{Cmfe}--\eqref{Initialdata} which was an open problem in \cite{Camejo:2015} for the coagulation rate $A$ satisfies $(\Lambda_1)$ whatever the approximations. Moreover, we have also relaxed the assumption on the initial data as discussed in \cite{Norris:1999}. In addition, we have included $\alpha = 1$ in the selection rate $S_1$ in \cite{Camejo:2015}. The motivation of the present work is from  \cite{Barik:2017Anote, Barik:2018Mass, Camejo:2015, Laurencot:2018}. \\

Let us end the introductory section by describing the plan of the paper. In Section 2, we introduce some preliminary results, assumptions and statement of the main result i.e. Theorem \ref{TheoremCmfe}. In Section $3$, the existence and uniqueness of truncated solutions to \eqref{Cmfe} is shown by using both conservative and non-conservative truncation. In addition, the existence of mass-conserving weak solutions is proved by using a weak $L^1$ compactness technique in this section.

 \section{Assumptions, Preliminaries and Statement of the Main Result}
 Before stating the main result of this paper, we first describe the class of functions $g^{in}$, $A$, $S$ and $b$. More precisely, we assume that
the initial data $g^{in}$, $A$, $S$ and $b$ enjoy the following assumptions.\\

($\Lambda_1$)
 $A(y,z)\leq k_1\frac{(1+y+z) }{(yz)^{\beta}}$ for all $(y, z) \in \mathds{R}_{>0} \times \mathds{R}_{>0}$, $k_1\ge 0$ and $\beta \in [0, 1/2)$.\\
\\
($\Lambda_2$) there exists a positive constant $c_1>2$ (depending on $\nu$ and $\beta$) such that
\begin{eqnarray*}
\int_0^z y^{-2\beta}b(y|z)dy \leq c_1 z^{-2\beta},
\end{eqnarray*}
where $b(y|z)=(\nu+2)\frac{y^{\nu}}{z^{1+\nu}}$, for $-1< \nu \le 0$.\\

 Note: Throughout the paper we assume $b(y|z)=(\nu+2)\frac{y^{\nu}}{z^{1+\nu}}$, for $-1< \nu \le 0$.\\
\\
($\Lambda_3$) $S(y) \leq k_2 y^{1+\nu},\ \forall y\in \mathds{R}_{>0} $ for $k_2 \ge 0$ and there exists a $\gamma \in (1, 2)$ (depending on $\nu$ \& $\beta$) such that $ \gamma (\nu -\beta) +1>0 $, where $\nu$ is defined in $(\Lambda_2)$.\\

($\Lambda_4$) $g^{in} \in L^1_{-2\beta, 1}(\mathds{R}_{>0})$.\\

\begin{rmkk}
One can easily be checked that our coagulation rate is covering the Smoluchowski coagulation kernel in Brownian motion \cite{Aldous:1999}, formation of bubbles in stochastic stirred forths \cite{Clark:1999} and Granulation kernel \cite{Kapur:1972} in the existence result.
\end{rmkk}
Now, we are in a position to state the main theorem of this paper.
\begin{thm}\label{TheoremCmfe}
 Consider a function $g^{in}$ satisfying $(\Lambda_4)$ and assume that the functions $A$, $b$ and $S$ enjoy the assumptions  $(\Lambda_1)$--$(\Lambda_3)$.
Let $g_n$ be the solution to (\ref{tcfe}) for $n\ge 1$. Then there is a subsequence $(g_{n_k})$ of $(g_{n})$ and a mass conserving solution $g$ to \eqref{Cmfe}--\eqref{Initialdata} such that
 \begin{align}\label{TheoremEquation}
 g_{n_k}\to g \ \  \text{in}\ \mathcal{C}([0,T]_w; L_{-\beta, 1}^1(\mathds{R}_{>0} ) )\ \text{for each}\ T>0
 \end{align}
 satisfying the following weak formulation
 \begin{align}\label{definition}
\int_0^{\infty} [ g(t, y) - g^{in}(y)] \omega(y)dy=&\frac{1}{2}\int_0^t \int_0^{\infty} \int_{0}^{\infty}\tilde{\omega}(y, z) A(y, z)g(s, y) g(s, z)dzdyds\nonumber\\
&-\int_0^t \int_0^{\infty}\eta(y)S(y) g(s, y)dyds,
\end{align}

where
\begin{align}\label{Identity1}
\tilde{\omega} (y,z):=\omega(y+z)-\omega (y)-\omega(z)
\end{align}
and
\begin{align}\label{Identity2}
\eta(y):=\omega(y)-\int_0^y b(z|y)\omega(z)dz,
\end{align}
for every $t \in [0, T]$ and $\omega \in L^{\infty}(\mathds{R}_{>0})$.
 \end{thm}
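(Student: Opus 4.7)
The plan is to use the standard weak $L^1$ compactness machinery for coagulation-fragmentation equations, but with two dedicated cutoffs: one near $y=0$ to tame the $(yz)^{-\beta}$ and $y^\nu$ singularities, and one near $y=\infty$ to prevent mass escaping in the limit. First I would collect, uniformly in $n$, the moment bounds that the truncated problem (tcfe) supplies. Using $(\Lambda_2)$, $(\Lambda_3)$ and $(\Lambda_4)$, I expect to control $\sup_{t\in[0,T]}\int_0^\infty y^{-2\beta}g_n(t,y)\,dy$ by testing the equation with $\omega(y)=y^{-2\beta}$: the coagulation part gives a nonpositive $\tilde{\omega}$ on the super-additive part and the fragmentation part is absorbed because $\int_0^z y^{-2\beta}b(y|z)\,dy\le c_1 z^{-2\beta}$ and $S(y)\le k_2 y^{1+\nu}$ so that the resulting kernel factor $y^{1+\nu-2\beta}$ is controlled by the condition $\gamma(\nu-\beta)+1>0$. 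Mass conservation of the approximation, together with $(\Lambda_4)$, gives the $L^1_1$ bound. A higher moment of order $\gamma\in(1,2)$ would then be propagated on a finite time interval by the usual Gronwall argument applied to $\int y^\gamma g_n\,dy$, using the super-linearity of $y\mapsto y^\gamma$ to absorb the coagulation gain and using $(\Lambda_3)$ to bound the fragmentation gain.

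Next I would upgrade these moment bounds to equi-integrability via the de la Vallée-Poussin theorem: pick a convex nondecreasing $\Phi$ with $\Phi(r)/r\to\infty$ such that $\Phi(g^{in})/(1+y^{2\beta})$ is integrable, and propagate $\int \Phi(g_n(t,\cdot))(y^{-2\beta}+y)\,dy$ using the convexity inequalities available for coagulation-fragmentation (the standard bookkeeping of Laurençot--Mischler). Combined with the tail bound from the $\gamma$-moment, this yields a weakly compact set in $L^1_{-\beta,1}(\mathds{R}_{>0})$ at each time $t$. For equicontinuity in time I would estimate, for $\omega\in L^\infty\cap L^1$ suitably chosen, the difference $\int[g_n(t,y)-g_n(s,y)]\omega(y)\,dy$ by plugging $\omega$ into the weak formulation satisfied by $g_n$; the coagulation integrand is bounded by $\|\omega\|_\infty A(y,z)g_ng_n$, which is integrable uniformly in $n$ thanks to $(\Lambda_1)$ and the $y^{-2\beta}$ and $y$-moment bounds, and similarly for the fragmentation term via $(\Lambda_3)$. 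A refined Arzelà--Ascoli argument on $\mathcal{C}([0,T]_w;L^1_{-\beta,1})$ then delivers the subsequence and the limit $g$ claimed in \eqref{TheoremEquation}.

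The passage to the limit in \eqref{definition} is then a matter of decomposing the integration domain into the three regions $(0,\delta)$, $(\delta,R)$ and $(R,\infty)$ for the $(y,z)$-plane. On the middle region the kernel $A$ is bounded and $\tilde{\omega}$ is bounded, so the nonlinear coagulation integral passes to the limit by the standard product argument: weak $L^1$ convergence of $g_{n_k}$ times strong convergence of the truncation of $A g_{n_k}$ (which is controlled via the time equicontinuity, allowing a strong convergence argument of the Aldous type, or directly via Egorov). Uniform smallness on $(0,\delta)$ comes from the $y^{-2\beta}$ moment together with $(\Lambda_1)$, since $A(y,z)/((1+y+z)(yz)^{-\beta})$ is bounded and $\tilde\omega$ is uniformly bounded; smallness on $(R,\infty)$ follows from the $\gamma$-moment tail. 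The fragmentation term is treated similarly using $(\Lambda_2)$ and $(\Lambda_3)$; note that $\eta(y)$ is uniformly bounded by $(1+N)\|\omega\|_\infty$.

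The main obstacle I anticipate is proving mass conservation, in particular for the non-conservative approximation, which is equivalent to passing to the limit in \eqref{definition} with $\omega(y)=y$. Since $y\notin L^\infty$, I would use $\omega_R(y)=y\wedge R$ and pass to the limit $R\to\infty$ after passing $n_k\to\infty$. The coagulation contribution $\tilde{\omega}_R(y,z)$ vanishes on $\{y+z\le R\}$ and is bounded by $\min(y,z)\mathds{1}_{\{y+z>R\}}$, so its integral against $A g g$ is dominated by $\int\!\!\int_{y+z>R}(1+y+z)(yz)^{-\beta}\min(y,z)g(s,y)g(s,z)\,dy\,dz$, which tends to zero by the uniform $\gamma$-moment and $y^{-2\beta}$-moment bounds passed to the limit via Fatou. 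The fragmentation contribution is $\int[\omega_R(y)-\int_0^y b(z|y)\omega_R(z)\,dz]S(y)g\,dy$, which vanishes as $R\to\infty$ by \eqref{MCP} and the $y^{1+\nu}$ growth of $S$ combined again with the $\gamma$-moment. Putting these pieces together yields $\mathcal{M}_1(g(t))=\mathcal{M}_1(g^{in})$ for all $t\in[0,T]$, completing the proof.
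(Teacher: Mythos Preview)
Your overall strategy matches the paper's weak $L^1$ compactness programme, but there is one genuine gap: your tail control at $y\to\infty$ (and hence both the compactness in $L^1_{-\beta,1}$ and your mass-conservation argument with $\omega_R=y\wedge R$) rests on propagating, uniformly in $n$, a moment of order $\gamma\in(1,2)$, i.e.\ a bound on $\int_0^\infty y^{\gamma} g_n(t,y)\,dy$. Hypothesis $(\Lambda_4)$ only places $g^{in}$ in $L^1_{-2\beta,1}(\mathds{R}_{>0})$, so $\int_0^\infty y^{\gamma} g^{in}(y)\,dy$ need not be finite for any $\gamma>1$. On the truncated problem the initial $\gamma$-moment is finite but is of order $n^{\gamma-1}$, and a Gronwall argument from that starting value produces a bound blowing up with $n$; it therefore gives neither tightness near infinity nor the smallness of the remainder $\int\!\!\int_{y+z>R}(1+y+z)(yz)^{-\beta}\min(y,z)g g\,dy\,dz$ that your last paragraph requires.

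The paper closes this gap by invoking de la Vall\'ee--Poussin \emph{twice}, not once. In addition to the convex function $\Phi$ you use for equi-integrability (the paper's $\sigma_2$), it selects a second function $\sigma_1\in\mathcal{C}_{VP,\infty}$---convex, with concave derivative, superlinear, and with $\sigma_1(p)\le S_\gamma p^{\gamma}$---such that $\Gamma_1:=\int_0^\infty\sigma_1(y)g^{in}(y)\,dy<\infty$; this is precisely what the finiteness of the first moment guarantees. Propagating $\int_0^n\sigma_1(y)g_n(t,y)\,dy$ by Gronwall (Lemma~\ref{MassConCoagMulti}) then gives a uniform-in-$n$ superlinear tail bound, and the same lemma also yields the key estimate
\[
(1-\zeta)\int_0^T\!\!\int_0^n\!\!\int_{n-y}^n \sigma_1(y)\,A(y,z)\,g_n g_n\,dz\,dy\,ds\le \Theta(T),
\]
from which mass conservation in the non-conservative case follows because on the set $\{y+z\ge n\}$ the weight $(y+z)$ in the mass-loss term of \eqref{ntmc} is dominated by $\sigma_1(y)+\sigma_1(z)$ up to a factor $o(1)$ as $n\to\infty$. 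If you replace every occurrence of ``$\gamma$-moment'' in your sketch by ``$\sigma_1$-moment'' chosen in this way, the remainder of your outline goes through essentially as written.
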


Note: Here, the space of weakly continuous functions from $[0, T]$ to $L_{-\beta, 1}^1(\mathds{R}_{>0})$ is denoted by $\mathcal{C}( [0,T]_w;  L^1_{-\beta, 1}(\mathds{R}_{>0}) )$, and that a sequence $(g_n)$ converges to $g$ in $\mathcal{C}( [0,T]_w;  L^1_{-\beta, 1}(\mathds{R}_{>0}) )$ if
\begin{align}
\lim_{n \to \infty} \sup_{t \in [0,T]}\bigg|\int_0^{\infty}(y^{-\beta}+y)[g_n(t, y)-g(t, y)] \omega(y)dy\bigg|=0,
\end{align}
for every $\omega \in L^{\infty}(\mathds{R}_{>0})$.\\

 In order to prove Theorem \ref{TheoremCmfe} we need to define a particular class of convex functions denoted as $\mathcal{C}_{VP, \infty}$. Let us consider non-negative and convex functions $\sigma_1, \sigma_2  \in \mathcal{C}^{\infty}([0, \infty))$ and belong to the class $\mathcal{C}_{VP, \infty}$, if they enjoy the following properties:
\begin{description}
  \item[(i)] $\sigma_j(0)=\sigma_j'(0)=0$ and $\sigma_j'$ is concave;
  \item[(ii)] $\lim_{p \to \infty} \sigma_j'(p) =\lim_{p \to \infty} \frac{ \sigma_j(p)}{p}=\infty$;
  \item[(iii)] for $\gamma \in (1, 2)$,
  \begin{align*}
  S_{\gamma}(\sigma_j):= \sup_{p \ge 0} \bigg\{   \frac{ \sigma_j(p)}{p^{\gamma}} \bigg\} < \infty,
  \end{align*}
  for $j=1, 2$.
\end{description}

Further, since $g^{in}\in L_{-2\beta, 1}^1(\mathds{R}_{>0})$, then a refined version of de la Vall\'{e}e-Poussin theorem see  \cite[Theorem~2.8]{Laurencot:2015} ensures that there exist two non-negative functions $\sigma_1$ and $\sigma_2$ in $\mathcal{C}_{VP, \infty}$ with
\begin{align}\label{convexp1}
\sigma_i(0)=0,~~~\lim_{p \to {\infty}}\frac{\sigma_i(p)}{p}=\infty,~~~~i=1,2
\end{align}
and
\begin{align}\label{convexp2}
\Gamma_1 := \int_0^{\infty}\sigma_1(y)g^{in}(y)dy<\infty,~~\text{and}~~\Gamma_2 :=\int_0^{\infty}{\sigma_2(y^{-\beta}g^{in}(y))}dy<\infty.
\end{align}

Let us recall some additional properties of $\mathcal{C}_{VP, \infty}$ which are also required to prove Theorem \ref{TheoremCmfe}.
\begin{lem} Consider $\sigma_1$, $\sigma_2$ in $\mathcal{C}_{VP, \infty}$. Then we have the following results
\begin{equation}\label{convexp3}
\hspace{-5cm} \sigma_2(p_1)\le p_1\sigma'_2(p_1)\le 2\sigma_2(p_1),
\end{equation}
\begin{equation}\label{convexp4}
\hspace{-5.5cm} p_1\sigma_2'(p_2)\le \sigma_2(p_1)+\sigma_2(p_2),
\end{equation}
and
\begin{equation}\label{convexp5}
0 \le \sigma_1(p_1+p_2)-\sigma_1(p_1)-\sigma_1(p_2)\le  2\frac{p_1\sigma_1(p_2)+p_2\sigma_1(p_1)}{p_1+p_2},
\end{equation}
 for all $p_1, p_2 \in \mathds{R}_{>0}$.
\end{lem}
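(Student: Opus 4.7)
All three estimates are consequences of three elementary facts I will use repeatedly: any $\sigma\in\mathcal{C}_{VP,\infty}$ is convex with $\sigma(0)=\sigma'(0)=0$, its derivative $\sigma'$ is concave with $\sigma'(0)=0$, and therefore $\sigma'$ is subadditive (a concave function vanishing at the origin satisfies $\sigma'(a+b)\le\sigma'(a)+\sigma'(b)$) while $t\mapsto\sigma'(t)/t$ is non-increasing on $(0,\infty)$. My overall plan is to chain these observations together, establishing (\ref{convexp3}) first and then using it inside the proofs of (\ref{convexp4}) and (\ref{convexp5}).

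For (\ref{convexp3}), the tangent-line inequality from convexity $\sigma_2(0)\ge\sigma_2(p_1)+\sigma_2'(p_1)(0-p_1)$ rearranges directly to $\sigma_2(p_1)\le p_1\sigma_2'(p_1)$. For the upper bound, monotonicity of $t\mapsto\sigma_2'(t)/t$ gives $\sigma_2'(t)\ge(t/p_1)\sigma_2'(p_1)$ for $t\in(0,p_1]$; integrating over $[0,p_1]$ produces $\sigma_2(p_1)\ge p_1\sigma_2'(p_1)/2$, which is the desired bound. Estimate (\ref{convexp4}) is then a one-liner: convexity of $\sigma_2$ yields $\sigma_2(p_1)\ge\sigma_2(p_2)+\sigma_2'(p_2)(p_1-p_2)$, hence $p_1\sigma_2'(p_2)\le\sigma_2(p_1)-\sigma_2(p_2)+p_2\sigma_2'(p_2)$, and the upper half of (\ref{convexp3}) bounds $p_2\sigma_2'(p_2)$ by $2\sigma_2(p_2)$.

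The lower bound in (\ref{convexp5}) is superadditivity of $\sigma_1$, which follows from convexity plus $\sigma_1(0)=0$ by the standard computation $\sigma_1(p_i)\le (p_i/(p_1+p_2))\sigma_1(p_1+p_2)$ summed over $i=1,2$. The upper bound is the main obstacle. My plan is to write
\begin{align*}
\sigma_1(p_1+p_2)-\sigma_1(p_2)=\int_0^{p_1}\sigma_1'(p_2+s)\,ds\le\int_0^{p_1}[\sigma_1'(p_2)+\sigma_1'(s)]\,ds=p_1\sigma_1'(p_2)+\sigma_1(p_1)
\end{align*}
using subadditivity of $\sigma_1'$, which delivers the asymmetric estimate $\sigma_1(p_1+p_2)-\sigma_1(p_1)-\sigma_1(p_2)\le p_1\sigma_1'(p_2)$ and, by the symmetric argument, $\le p_2\sigma_1'(p_1)$. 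The subtle step is then to combine these two asymmetric bounds into the symmetric right-hand side of (\ref{convexp5}): taking the convex combination with weights $p_2/(p_1+p_2)$ and $p_1/(p_1+p_2)$ gives $\sigma_1(p_1+p_2)-\sigma_1(p_1)-\sigma_1(p_2)\le p_1p_2[\sigma_1'(p_1)+\sigma_1'(p_2)]/(p_1+p_2)$, after which (\ref{convexp3}) applied with $\sigma_1$ in place of $\sigma_2$ converts each $p_i\sigma_1'(p_i)$ into $2\sigma_1(p_i)$ and produces the claimed factor of two.
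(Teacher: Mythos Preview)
Your proof is correct. The paper does not actually supply a proof of this lemma; it only writes that the result ``can be easily proved in a similar way as given in'' the cited references \cite{Barik:2017Anote, Barik:2018Mass, Filbet:2004, Laurencot:2018}. Your argument is precisely the standard one that appears in those references: the lower bound in (\ref{convexp3}) from convexity at the origin, the upper bound from concavity of $\sigma'$ (equivalently, monotonicity of $\sigma'(t)/t$) integrated over $[0,p_1]$, (\ref{convexp4}) from the tangent inequality combined with (\ref{convexp3}), superadditivity in (\ref{convexp5}) from convexity with $\sigma_1(0)=0$, and the upper bound in (\ref{convexp5}) from subadditivity of $\sigma_1'$ followed by the symmetric convex combination and a final use of (\ref{convexp3}). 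So your approach coincides with what the paper is citing.
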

\begin{proof}
This lemma can be easily proved in a similar way as given in \cite{Barik:2017Anote, Barik:2018Mass, Filbet:2004, Laurencot:2018}.
\end{proof}

\section{Existence of weak solutions}

 In this section, we  construct a mass conserving solution relies on both the conservative and non-conservative approximations to \eqref{Cmfe}--\eqref{Initialdata} which is defined as: for a given natural number $n\in\mathds{N}$, we set
\begin{align}\label{Initialtrunc}
g_n^{in}(y)=g^{in}(y)\chi_{(0, n)}(y),
\end{align}
for  $\zeta \in \{0, 1\}$,
\begin{equation}\label{CoagKerTrun}
A_n^\zeta(y, z) := A(y, z) \chi_{(1/n, n)}(y) \chi_{(1/n, n)}(z) \left[ 1 - \zeta + \zeta \chi_{(0, n)}(y+z) \right],
\end{equation}
and
\begin{align}\label{SelectionTrunc}
S_n^c(y)=S(y)\chi_{(0, n )}(y).
\end{align}

Using \eqref{Initialtrunc}, \eqref{CoagKerTrun} and \eqref{SelectionTrunc}, we can rewrite \eqref{Cmfe}--\eqref{Initialdata} as
\begin{align}\label{tcfe}
\frac{\partial g_n(t, y)}{\partial t}  = &\frac{1}{2} \int_{0}^{y} A_n^\zeta(y-z, z) {g_n}(t, y-z) {g_n}(t, z)dz - \int_{0}^{n-\zeta y} A_n^\zeta(y, z) g_n(t, y) g_n(t, z)dz\nonumber\\
  &+\int_y^{n}b(y|z)S_n^c(z) g_n(t, z)dz-S_n^c(y) g_n(t, y),
\end{align}
with the truncated initial condition
\begin{align}\label{1tnin1}
g_n(0, y)=g_n^{in},  ~~ \text{for}~~ y\in (0, n).
\end{align}

For $\zeta=1$, the existence and uniqueness of a nonnegative solution $g_n \in \mathcal{C}'( [0, \infty); L^1(0, n) )$ to (\ref{tcfe})--(\ref{1tnin1}) can easily obtained by a classical fixed point theorem, see \cite{Camejo:2015, Giri:2011Thecontinuous, Giri:2012, Stewart:1989}. Moreover, $g_n$ enjoys a truncated version of mass conservation property, i.e.
\begin{align}\label{tmc}
\int_0^n y g_n(t, y)dy= \int_0^n yg_n^{in}(y)dy ~~~~ \text{for~all} ~~t> 0,
\end{align}
and for $\zeta=0$, we may follow \cite{Barik:2018Mass, Camejo:2015} to show the existence and uniqueness of a nonnegative solution $g_n \in \mathcal{C}'( [0, \infty); L^1(0, n) )$ to (\ref{tcfe})--(\ref{1tnin1}) and it satisfies
\begin{align}\label{ntmc}
\int_0^n y g_n & (t, y)dy=   \int_0^n yg_n^{in}(y)dy\nonumber\\
& -\frac{1}{2}\int_0^t \int_0^n \int_{n-y}^n (y+z)  A(y, z) \chi_{(1/n, n)}(y) \chi_{(1/n, n)}(z)  g_n(s, y)g_n(s, z)dz dy ds.
\end{align}

We next recall that, for $n\geq 1$, and $\omega \in L^{\infty}(\mathbb{R}_{>0})$, the solution $g_n$ to (\ref{tcfe})--(\ref{1tnin1}) satisfies the following weak formulation
\begin{align}\label{nctp3}
\int_0^n[g_n& (t, y)-g_n^{in}(y)]\omega(y)dy= -\int_0^t\int_0^n H_{\omega}(z)S_n(z)g_n(s, z)dzds \nonumber\\
& +\frac{1}{2} \int_0^t \int_0^n\int_0^{n} G_{\omega, n}^{\zeta}(y, z) \chi_{(1/n, n)}(y) \chi_{(1/n, n)}(z)  A(y, z) g_n(s, y)g_n(s, z)dzdyds,
\end{align}
where
\begin{align}\label{G Omega}
 G_{\omega, n}^{\zeta}(y, z)= \omega (y+z)\chi_{(0, n)}(y+z)-[\omega(y)+\omega(z)](1-\zeta +\zeta \chi_{(0, n)}(y+z))
 \end{align}
and
\begin{align}\label{H Omega}
H_{\omega}(z)=\omega(z)-\int_0^z \omega(y)b(y|z)dy.
\end{align}

Next our aim to show that the family of solutions $\{ g_n\}_{n \ge 1}$ is relatively compact in $\mathcal{C}([0,T]_w; L_{-\beta, 1}^1(\mathds{R}_{>0} ) )$. For that purpose, we apply the weak $L^1$ compactness method which is used in the pioneering work of Stewart \cite{Stewart:1989}. In the next lemma, we show the family of solutions $\{ g_n\}_{n \ge 1}$ is uniform bounded in $L_{-2\beta, 1}^1(\mathds{R}_{>0})$.

\subsection{Uniform Bound}
\begin{lem}\label{G(T)}
 Assume $(\Lambda_1)$--$(\Lambda_4)$ hold. Let $T>0$, then there is a constant $ \mathcal{G}(T)$ depending on $T$ such that
\begin{align*}
\int_0^{\infty} (y^{-2\beta}+y)g_n(t, y)dy \leq \mathcal{G}(T)\ \ \text{for all}\ t \in[0, T].
\end{align*}
\end{lem}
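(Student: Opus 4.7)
The plan is to split $M_n(t) := \int_0^{\infty}(y^{-2\beta}+y)g_n(t,y)\,dy$ into the mass moment and the $y^{-2\beta}$ moment, bound each piece independently, and close the estimate with Gronwall's inequality. The mass moment is immediate: for $\zeta=1$, identity \eqref{tmc} gives $\int_0^n y g_n(t,y)dy=\int_0^n y g_n^{in}(y)dy\le\mathcal{M}_1(g^{in})$, while for $\zeta=0$, the correction term in \eqref{ntmc} is non-positive, so the same upper bound holds. Thus the mass piece is dominated by $\mathcal{M}_1(g^{in})$ uniformly in $n$ and in $t$.

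For the $y^{-2\beta}$ moment I would test the weak formulation \eqref{nctp3} with $\omega(y)=y^{-2\beta}$; although this $\omega$ lies outside $L^{\infty}(\mathds{R}_{>0})$, it is admissible on the truncated problem by a standard approximation $\omega_\varepsilon(y):=\min\{y^{-2\beta},\varepsilon^{-2\beta}\}$ and monotone passage $\varepsilon\to 0$, all integrals being finite thanks to $(\Lambda_4)$. Two things must be checked. First, the coagulation contribution is non-positive: since $y\mapsto y^{-2\beta}$ is decreasing, one has $(y+z)^{-2\beta}\le y^{-2\beta}+z^{-2\beta}$, and a two-case inspection of \eqref{G Omega} (whether $y+z\le n$ or $y+z>n$, and whether $\zeta=0$ or $\zeta=1$) yields $G_{\omega,n}^{\zeta}(y,z)\le 0$ in every case, so this whole term can be dropped. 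Second, by $(\Lambda_2)$,
\begin{equation*}
-H_{\omega}(z)=\int_0^z y^{-2\beta}b(y|z)\,dy-z^{-2\beta}\le (c_1-1)z^{-2\beta},
\end{equation*}
and $c_1-1>0$ because $c_1>2$. Combined with $(\Lambda_3)$ this gives $-H_\omega(z)S_n^c(z)\le k_2(c_1-1)z^{1+\nu-2\beta}$.

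The remaining algebraic observation is that, since $\nu\in(-1,0]$ and $\beta\in[0,1/2)$, the exponent $1+\nu-2\beta$ lies in $(-2\beta,1]$, hence $z^{1+\nu-2\beta}\le z^{-2\beta}+z$ for all $z>0$ (split at $z=1$). Substituting into \eqref{nctp3}, discarding the non-positive coagulation term and adding the mass bound yields
\begin{equation*}
M_n(t)\le M^{in}+k_2(c_1-1)\int_0^t M_n(s)\,ds,\qquad M^{in}:=\int_0^{\infty}(y^{-2\beta}+y)g^{in}(y)\,dy,
\end{equation*}
with $M^{in}<\infty$ by $(\Lambda_4)$. Gronwall's lemma then provides $\mathcal{G}(T):=M^{in}\exp(k_2(c_1-1)T)$ as the required uniform constant.

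The main obstacle I anticipate is ensuring the sign $G_{\omega,n}^{\zeta}\le 0$ really does hold for both approximations, including the non-conservative regime where $G_{\omega,n}^{0}$ does not vanish when $y+z>n$; the sign $-2\beta<0$ is precisely what makes this work. Every other step is a direct application of the structural hypotheses $(\Lambda_1)$–$(\Lambda_4)$ together with a one-moment interpolation, and so is essentially routine.
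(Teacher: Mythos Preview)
Your argument is correct and coincides with the paper's proof in all essential points: the sign check $G_{\omega,n}^{\zeta}\le 0$, the use of $(\Lambda_2)$ to bound $-H_{\omega}(z)\le(c_1-1)z^{-2\beta}$, the interpolation $z^{1+\nu-2\beta}\le z^{-2\beta}+z$, and Gronwall, yielding the identical constant $\mathcal{G}(T)=\|g^{in}\|_{L^1_{-2\beta,1}}e^{k_2(c_1-1)T}$. The only cosmetic difference is that the paper tests directly with the combined weight $\omega(y)=(y^{-2\beta}+y)\chi_{(0,n)}(y)$ rather than splitting the mass and $y^{-2\beta}$ moments, but this changes nothing in the estimates.
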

\begin{proof}
We take $\omega(y)=(y^{-2\beta}+y) \chi_{(0, n)}(y)$, and inserting it into (\ref{nctp3}) to obtain
\begin{align}\label{bound1}
\int_0^n (y^{-2\beta}+y) & [g_n(t, y)-g_n^{in}(y)]dy =- \int_0^t\int_0^n H_{\omega}(z)S_n(z)g_n(s, z)dz ds\nonumber\\
+&\frac{1}{2} \int_0^t \int_0^n\int_0^n G_{\omega, n}^{\zeta}(y, z) \chi_{(1/n, n)}(y) \chi_{(1/n, n)}(z)  A(y, z) g_n(s, y)g_n(s, z)dz dy ds.
\end{align}
We simplify $G_{\omega, n}^{\zeta}$ and $H_{\omega}$ separately. Next, on the first case for $y+z< n$ follows from \eqref{G Omega} that
\begin{align*}
G_{\omega, n}^{\zeta}(y, z)= & \omega (y+z)\chi_{(0, n)}(y+z)-[\omega(y)+\omega(z)](1-\zeta +\zeta \chi_{(0, n)}(y+z))\nonumber\\
\le &(y+z)^{-2\beta}+y+z-(y^{-2\beta}+y)-(z^{-2\beta}+z)\nonumber\\
\le &(y^{-2\beta}+z^{-2\beta}+y+z)-(y^{-2\beta}+y)-(z^{-2\beta}+z) =0.
\end{align*}
On the other case for $y+z \ge n$, \eqref{G Omega} yield
\begin{align*}
G_{\omega, n}^{\zeta}(y, z)= & \omega (y+z)\chi_{(0, n)}(y+z)-[\omega(y)+\omega(z)](1-\zeta +\zeta \chi_{(0, n)}(y+z))\nonumber\\
=&0-[y^{-2\beta}+y+z^{-2\beta}+z](1-\zeta) \le 0.
\end{align*}
We estimate $H_{\omega}(z)$, by using \eqref{H Omega}, $(\Lambda_2)$ and \eqref{MCP}, as
 \begin{align}\label{bound2}
 H_{\omega}(z)& =(z^{-2\beta}+z)-\int_0^z(y^{-2\beta}+y)b(y|z)dy\nonumber\\
 & \ge  z^{-2\beta}+z -c_1z^{-2\beta}-z= (1-c_1) z^{-2\beta}.
 \end{align}
Since $G_{\omega, n}^{\zeta}$ is non-positive for both above cases, thus one can infer that the second integral on the right-hand side of (\ref{bound1}) is non-positive. Then using (\ref{bound2}), $(\Lambda_3)$ and $(\Lambda_4)$ into (\ref{bound1}), we evaluate
\begin{align*}
\int_0^n (y^{-2\beta}+y) g_n(t, y)dy \le & \int_0^n (y^{-2\beta}+y) g_n^{in}(y)dy + k_2 (c_1-1) \int_0^t\int_0^n z^{1+\nu-2\beta} g_n(s, z)dzds\\
\le & \int_0^{\infty} (y^{-2\beta}+y) g^{in}(y)dy + k_2 (c_1-1) \int_0^t\int_0^n ( z^{-2\beta}+z) g_n(s, z)dzds \\
\le & \|g_0 \|_{L^1_{-2\beta, 1}(\mathds{R}_{>0})  }+ k_2 (c_1-1) \int_0^t\int_0^n ( z^{-2\beta}+z) g_n(s, z)dzds.
\end{align*}
Finally, an application of Gronwall's inequality gives
\begin{align*}
\int_0^n (y^{-2\beta}+y) g_n(t, y)dy \le \mathcal{G}(T),
\end{align*}
where $\mathcal{G}(T):=\|g_0 \|_{L^1_{-2\beta, 1}(\mathds{R}_{>0})  } e^{k_2 (c_1-1)T}$, for each $n \in \mathds{N}$. This proves Lemma \ref{G(T)}.
\end{proof}

In the coming lemma, we discuss the behaviour of $g_n$ for large volume particle $y$.
\begin{lem}\label{MassConCoagMulti}
Assume that the coagulation rate, breakage function, selection rate and initial data satisfy $(\Lambda_1)$--$(\Lambda_4)$, respectively. Then for every $n\ge 1$ and for $T>0$,
\begin{equation}\label{C(T)}
\sup_{t\in [0, T]}\int_0^n \sigma_1 (y)g_n(t, y)dy  \le \Theta(T),
\end{equation}

\begin{equation}\label{C(T1)}
(1-\zeta)\int_0^T \int_0^n\int_{n-y}^n \sigma_1 (y) \chi_{(1/n, n)}(y) \chi_{(1/n, n)}(z) A(y, z)g_n(s, y)g_n(s, z) dz dy ds\leq \Theta(T),
\end{equation}
and
\begin{equation}\label{C(T2)}
\sup_{t\in [0, T]} \int_0^t \int_0^n \frac{z\sigma_1'(y) -\sigma_1(y)}{\nu+3} S_n(y) g_n(s, y) dy ds \le \Theta(T),
\end{equation}
where $\Theta(T)$ (depending on $T$) is a positive constant and the $\sigma_1 \in \mathcal{C}_{VP, \infty}$ satisfies \eqref{convexp1} and \eqref{convexp2}.
\end{lem}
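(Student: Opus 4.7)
The plan is to test the weak formulation~\eqref{nctp3} with $\omega(y)=\sigma_1(y)\chi_{(0,n)}(y)$, rearrange so that every non-negative quantity sits on the left-hand side, and close a Gronwall loop on $M_n(t):=\int_0^n\sigma_1(y)g_n(t,y)\,dy$. Once the resulting estimate $M_n(t)\le\Theta(T)$ is in hand, the bounds \eqref{C(T1)} and \eqref{C(T2)} will drop out of the same identity, since they correspond to non-negative quantities that appear on the left.

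First I would dissect the coagulation integrand $G^\zeta_{\omega,n}(y,z)$ according to the position of $y+z$. On $\{y+z<n\}$ it reduces to $\sigma_1(y+z)-\sigma_1(y)-\sigma_1(z)$, which by \eqref{convexp5} is non-negative and bounded above by $2[y\sigma_1(z)+z\sigma_1(y)]/(y+z)$. On $\{y+z\ge n\}$ the case $\zeta=1$ gives $G^1_{\omega,n}=0$, while the case $\zeta=0$ gives $G^0_{\omega,n}=-[\sigma_1(y)+\sigma_1(z)]$. Exploiting the symmetry of $A$ in $(y,z)$, transferring this last contribution to the left-hand side yields precisely the triple integral appearing in \eqref{C(T1)}.

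The main obstacle is the sharp fragmentation lower bound $H_\omega(z)\ge [z\sigma_1'(z)-\sigma_1(z)]/(\nu+3)$, which I plan to establish as follows. Using $\sigma_1(0)=\sigma_1'(0)=0$ and integrating by parts twice in $H_\omega(z)=\sigma_1(z)-(\nu+2)z^{-(\nu+1)}\int_0^z y^\nu\sigma_1(y)\,dy$ produces the identity
\begin{equation*}
H_\omega(z)=\frac{z\sigma_1'(z)-\sigma_1(z)}{\nu+1}-\frac{1}{(\nu+1)z^{\nu+1}}\int_0^z u^{\nu+2}\sigma_1''(u)\,du.
\end{equation*}
Since $\sigma_1'\in\mathcal{C}_{VP,\infty}$ is concave, $\sigma_1''$ is non-increasing on $(0,\infty)$; and $\nu+1>0$ forces $u^{\nu+1}$ to be non-decreasing there. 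Applying Chebyshev's integral inequality to these oppositely monotone factors with weight $h(u)=u$ on $(0,z)$ yields
\begin{equation*}
\int_0^z u^{\nu+2}\sigma_1''(u)\,du\ \le\ \frac{2z^{\nu+1}}{\nu+3}\int_0^z u\sigma_1''(u)\,du\ =\ \frac{2z^{\nu+1}}{\nu+3}\bigl[z\sigma_1'(z)-\sigma_1(z)\bigr],
\end{equation*}
the last equality being a further integration by parts. Substituting back and simplifying produces the announced lower bound.

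Putting the pieces together, the left-hand side of \eqref{nctp3} becomes $M_n(t)$ plus the non-negative fragmentation term plus (when $\zeta=0$) the triple integral of \eqref{C(T1)}, while the right-hand side reduces to $\Gamma_1$ plus the positive coagulation piece on $\{y+z<n\}$. I would estimate the latter using \eqref{convexp5}, symmetry in $(y,z)$, the kernel bound $(\Lambda_1)$, and the elementary inequality $z(1+y+z)/(y+z)\le 1+z$, reducing it to a product of two moments,
\begin{equation*}
\int_0^n\sigma_1(y)y^{-\beta}g_n(s,y)\,dy\ \cdot\ \int_0^n(1+z)z^{-\beta}g_n(s,z)\,dz.
\end{equation*}
The second factor is controlled by Lemma~\ref{G(T)} via $z^{-\beta}+z^{1-\beta}\le z^{-2\beta}+z$. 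The first is split at $y=1$: on $(0,1)$ the bound $\sigma_1(y)\le S_\gamma(\sigma_1)y^\gamma$ with $\gamma>1>\beta$ gives $\sigma_1(y)y^{-\beta}\le S_\gamma(\sigma_1)$, which is absorbed into $\mathcal{G}(T)$ through $\int_0^1 g_n\le\int_0^1 y^{-2\beta}g_n\le\mathcal{G}(T)$; on $(1,n)$ the trivial inequality $y^{-\beta}\le 1$ returns $M_n(s)$. A linear Gronwall inequality for $M_n$ now yields \eqref{C(T)}, and since the two remaining non-negative terms on the left are dominated by the same constant, \eqref{C(T1)} and \eqref{C(T2)} follow simultaneously.
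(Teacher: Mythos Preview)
Your argument is correct and follows the same overall architecture as the paper: test \eqref{nctp3} with $\omega=\sigma_1\chi_{(0,n)}$, split the coagulation integral at $y+z=n$, move the non-positive pieces (the $\zeta=0$ off-diagonal term and the fragmentation lower bound) to the left, estimate the remaining positive coagulation piece via \eqref{convexp5} and Lemma~\ref{G(T)}, and close by Gronwall. The three estimates \eqref{C(T)}--\eqref{C(T2)} then fall out simultaneously, exactly as in the paper.

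Two technical steps are executed differently. For the fragmentation bound $H_{\sigma_1}(z)\ge [z\sigma_1'(z)-\sigma_1(z)]/(\nu+3)$, the paper uses the observation that $p\mapsto\sigma_1(p)/p$ is concave (a consequence of the concavity of $\sigma_1'$), writes $H_{\sigma_1}(z)=\int_0^z[\sigma_1(z)/z-\sigma_1(y)/y]\,yb(y|z)\,dy$ via \eqref{MCP}, applies the tangent-line inequality at $z$, and then evaluates $\int_0^z(z-y)yb(y|z)\,dy=z^2/(\nu+3)$ directly. Your route through two integrations by parts and Chebyshev's integral inequality with weight $u$ on $(0,z)$ is longer but lands on the identical constant; it has the merit of making the role of the opposite monotonicities of $u^{\nu+1}$ and $\sigma_1''$ explicit. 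For the positive coagulation term on $\{y+z<n\}$, the paper splits the square $(0,n)^2$ into the blocks $(0,1)^2$, $(1,n)\times(0,1)$, $(1,n)^2$ and estimates each separately, whereas you symmetrise in $(y,z)$, use the elementary pointwise bound $z(1+y+z)/(y+z)\le 1+z$, and factorise the double integral into $\int\sigma_1(y)y^{-\beta}g_n\cdot\int(1+z)z^{-\beta}g_n$. Your version is tidier and avoids the case analysis; the paper's version avoids invoking the growth control $\sigma_1(y)\le S_\gamma(\sigma_1)y^\gamma$ near the origin, using instead only the monotonicity $\sigma_1(y)\le\sigma_1(1)$ there. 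Either way the Gronwall input has the same structure, and the final bound is of the same form.
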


\begin{proof} We set $\omega (y)=\sigma_1(y) \chi_{(0, n)}(y)$, and inserting it into (\ref{nctp3}) to obtain
\begin{align}\label{large1}
\int_0^n \sigma_1(y)& g_n(t, y)dy =\int_0^n \sigma_1(y)g_n^{in}(y)dy -\int_0^t\int_0^n H_{\sigma_1}(z)S_n(z)g_n(s, z)dzds\nonumber\\
& +\frac{1}{2} \int_0^t \int_0^n\int_0^{n} G_{\sigma_1, n}^{\zeta}(y, z) \chi_{(1/n, n)}(y) \chi_{(1/n, n)}(z)  A(y, z) g_n(s, y)g_n(s, z)dzdyds,
\end{align}
where
\begin{align}\label{large2}
 G_{\sigma_1, n}^{\zeta}(y, z)= \sigma_1 (y+z)\chi_{(0, n)}(y+z)-[\sigma_1 (y)+\sigma_1 (z)](1-\zeta +\zeta \chi_{(0, n)}(y+z))
 \end{align}
and
\begin{align*}
H_{\sigma_1}(z)= \sigma_1(z)- \int_0^z \sigma_1(y)b(y|z)dy.
\end{align*}
By using \eqref{convexp2} and \eqref{Initialtrunc} into \eqref{large1}, we have
\begin{align}\label{large3}
\int_0^n \sigma_1(y)g_n(t, y)dy \le &\Gamma_1+\frac{1}{2}\int_0^t [P_n(s)+Q_n(s)]ds\nonumber\\
&- \int_0^t \int_0^n H_{\sigma_1}(z)  S_n(z)g_n(s, z)dzds,
\end{align}
where
\begin{align*}
P_n(s)= \int_0^n \int_0^{n-y}G_{\sigma_1, n}^{\zeta}(y, z) \chi_{(1/n, n)}(y) \chi_{(1/n, n)}(z)  A(y, z) g_n(s, y)g_n(s, z)dzdy,
\end{align*}
and
\begin{align*}
Q_n(s)=(1-\zeta) \int_0^n \int_{n-y}^n G_{\sigma_1, n}^{\zeta}(y, z) \chi_{(1/n, n)}(y) \chi_{(1/n, n)}(z)  A(y, z) g_n(s, y)g_n(s, z)dzdy.
\end{align*}
Multiplying $A(y, z)$ with \eqref{large2}, we have
\begin{align}\label{large4}
A(y, z)  G_{\sigma_1, n}^{\zeta}(y, z)= & A(y, z)  [\sigma_1 (y+z)\chi_{(0, n)}(y+z)\nonumber\\
     &-[\sigma_1 (y)+\sigma_1 (z)](1-\zeta +\zeta \chi_{(0, n)}(y+z))].
\end{align}

Next, we estimate $P_n(s)$, by using \eqref{large4}, \eqref{convexp5}, and ($\Lambda_1$), as
\begin{align}\label{Pn}
P_n(s)=&   \int_0^n \int_0^{n-y}     A(y, z)  \chi_{(1/n, n)}(y) \chi_{(1/n, n)}(z) [\sigma_1 (y+z)-\sigma_1 (y)-\sigma_1 (z)] \nonumber\\
  &~~~~~~~~~~~~~~\times g_n(s, y)g_n(s, z)dzdy \nonumber\\
\le &2k_1\int_0^n \int_0^{n-y} \frac{(1+y+z)}{(yz)^{\beta}} \times  \frac{y\sigma_1(z)+z\sigma_1(y)}{y+z}  g_n(s, y)g_n(s, z)dzdy \nonumber\\
\le & 24 k_1 \int_0^1 \int_0^{1} (yz)^{-\beta}   \frac{y\sigma_1(z) }{y+z} g_n(s, y)g_n(s, z)dzdy  \nonumber\\
& + 24 k_1 \int_1^n \int_0^{1}\frac{z}{(yz)^{\beta}} \times  \frac{y\sigma_1(z)+z\sigma_1(y)}{y+z} g_n(s, y)g_n(s, z)dzdy  \nonumber\\
& + 8k_1 \int_1^n \int_1^{n}  \frac{y\sigma_1(z)+z\sigma_1(y)}{(yz)^{\beta}} g_n(s, y)g_n(s, z)dzdy.
\end{align}
Let us estimate the first term on the right-hand side of \eqref{Pn}, by using Lemma \ref{G(T)} and monotonicity of $\sigma_1$, as
\begin{align}\label{Pn1}
 24 k_1 \int_0^1 \int_0^{1}  (yz)^{-\beta}   \frac{y\sigma_1(z) }{y+z} g_n(s, y)g_n(s, z)dzdy
 \le 24 k_1 \sigma_1(1)\mathcal{G}(T)^2.
\end{align}
Again by using Lemma \ref{G(T)} and monotonicity of $\sigma_1$, the second term on the right-hand side of \eqref{Pn} can be evaluated as
\begin{align}\label{Pn2}
 24 k_1 \int_1^n \int_0^{1} & \frac{z}{(yz)^{\beta}} \times  \frac{y\sigma_1(z)+z\sigma_1(y)}{y+z} g_n(s, y)g_n(s, z)dzdy \nonumber\\
\le & 24 k_1 \sigma_1(1) \int_1^n \int_0^{1} z^{-2\beta}  g_n(s, y)g_n(s, z)dzdy  \nonumber\\
&+24 k_1 \int_1^n \int_0^{1}  z^{-2\beta}  \sigma_1(y) g_n(s, y)g_n(s, z)dzdy  \nonumber\\
\le & 24 k_1 \sigma_1(1) \mathcal{G}(T)^2 +24 k_1 \mathcal{G}(T) \int_0^n   \sigma_1(y) g_n(s, y) dy.
\end{align}
Finally, we evaluate the last integral on the right-hand to \eqref{Pn}, by applying Lemma \ref{G(T)}, as
\begin{align}\label{Pn3}
 8k_1 \int_1^n \int_1^{n}  \frac{y\sigma_1(z)+z\sigma_1(y)}{(yz)^{\beta}} g_n(s, y)g_n(s, z)dzdy
\le & 16 k_1 \int_1^n \int_1^{n}  y\sigma_1(z) g_n(s, y)g_n(s, z)dzdy\nonumber\\
\le & 16 k_1 \mathcal{G}(T) \int_0^n   \sigma_1(y) g_n(s, y) dy.
\end{align}
Inserting \eqref{Pn1}, \eqref{Pn2} and \eqref{Pn3} into \eqref{Pn}, we obtain
\begin{align}\label{Pnf}
P_n(s) \le & 48 k_1 \sigma_1(1)\mathcal{G}(T)^2 + 40 k_1 \mathcal{G}(T) \int_0^n   \sigma_1(y) g_n(s, y) dy.
\end{align}
If $y+z \geq n$, then
\begin{align}\label{bountct}
G_{\sigma_1}(y,z)=-\sigma_1(y)-\sigma_1(z).
\end{align}
Using (\ref{bountct}), $Q_n(s)$ can be rewritten as
\begin{align}\label{Qn}
Q_n(s)=& - 2 k_1(1-\zeta) \int_0^n\int_{n-y}^{n} \sigma_1(y)  \chi_{(1/n, n)}(y) \chi_{(1/n, n)}(z) A(y, z)g_n(s, y)g_n(s, z)dz dy\nonumber\\
 \le & 0.
\end{align}
Since $\sigma_1$ is a non-decreasing convex function and its derivative is concave, then we estimate the $H_{\sigma_1}(z)$, by using \eqref{MCP}, as
\begin{align}\label{Hn}
H_{\sigma_1}(z)=&\sigma_1(z)-\int_0^z \sigma_1(y)b(y|z)dy\nonumber\\
= &  \int_0^z \bigg[ \frac{\sigma_1(z)}{z} y b(y|z) - \frac{\sigma_1(y)}{y} y b(y|z) \bigg]dy \nonumber\\
= &  \int_0^z \bigg[ \frac{\sigma_1(z)}{z}  - \frac{\sigma_1(y)}{y} \bigg] yb(y|z) dy \ge   \int_0^z \bigg( \frac{\sigma_1(z)}{z} \bigg)' (z-y) y b(y|z) dy \nonumber\\
= &  \frac{z\sigma_1'(z) -\sigma_1(z) } {z^2} \int_0^z  (z-y)y b(y|z) dy = \frac{z\sigma_1'(z) -\sigma_1(z)}{\nu+3}.
\end{align}
Inserting (\ref{Pnf}), (\ref{Qn}), and (\ref{Hn}) into (\ref{large3}), we obtain
\begin{align*}
\int_0^n \sigma_1(y)g_n(t, y)dy  & + \int_0^t \int_0^n \frac{y\sigma_1'(y) -\sigma_1(y)}{\nu+3} S_n(y) g_n(s, y) dy ds \nonumber\\
 &+ 2k_1(1-\zeta) \int_0^t \int_0^n\int_{n-y}^{n} \sigma_1(y) \chi_{(1/n, n)}(y) \chi_{(1/n, n)}(z)\nonumber\\
  &~~~~~~~~~~~~~~~~~~~\times A(y, z)g_n(s, y)g_n(s, z)dzdy ds \nonumber\\
  \le & \Gamma_1+ 48 k_1 \sigma_1(1)\mathcal{G}(T)^2 T+ 40 k_1 \mathcal{G}(T) \int_0^t \int_0^n   \sigma_1(y) g_n(s, y) dy ds.
\end{align*}
Then by Gronwall's inequality, we get
\begin{align*}
\int_0^n \sigma_1(y) & g_n(t, y)dy   + \int_0^t \int_0^n \frac{y\sigma_1'(y) -\sigma_1(y)}{\nu+3} S_n(y) g_n(s, y) dy ds \nonumber\\
 &+ 2k_1(1-\zeta) \int_0^t \int_0^n\int_{n-y}^{n} \sigma_1(y) \chi_{(1/n, n)}(y) \chi_{(1/n, n)}(z) A(y, z)g_n(s, y)g_n(s, z)dzdy ds \nonumber\\
 \le & \Theta(T),
\end{align*}
where $\Theta(T)=  (\Gamma_1+48 k_1 \sigma_1(1)\mathcal{G}(T)^2 T ) e^{40 k_1 \mathcal{G}(T) T} $,
which completes the proof of Lemma \ref{MassConCoagMulti}.

\end{proof}


In order to apply a refined version of de la Vall\`{e}e Poussin theorem \cite{Laurencot:2015} to show the equi-integrability condition for the family of solutions $\{ g_n\}_{n > 1}$, we require the following lemma.
\subsection{Equi-integrability}
\begin{lem}\label{last}
 Assume $(\Lambda_1)$--$(\Lambda_4)$ hold. Let $T>0$ and $n\ge R>1$, there is a constant $C(T, R)$ such that
\begin{align*}
(i)\ \ \ \ \sup_{t\in [0,T]}\int_0^R \sigma_2( y^{-\beta}  g_n(t, y))dy \leq C(T, R),
\end{align*}
where $\omega \in L^{\infty}(0,R)$ and the $\sigma_2 \in \mathcal{C}_{VP, \infty}$ satisfies \eqref{convexp1} and \eqref{convexp2}.\\

For every $\epsilon >0$ depending on $R_{\epsilon}>1$ such that
\begin{align*}
\hspace{-2.5cm}(ii)\ \ \ \ \sup_{t\in [0,T]}\int_{R_{\epsilon}}^{\infty}   g_n(t, y)dy \le \epsilon.
\end{align*}
\end{lem}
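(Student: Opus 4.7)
The plan for part (ii) is immediate: by Lemma~\ref{G(T)}, $\int_0^\infty y\,g_n(t,y)\,dy\le\mathcal{G}(T)$ uniformly in $n$ and $t\in[0,T]$, so Markov's inequality gives $\int_{R_\epsilon}^\infty g_n(t,y)\,dy\le R_\epsilon^{-1}\mathcal{G}(T)$, and the choice $R_\epsilon:=\mathcal{G}(T)/\epsilon$ does the job. (Alternatively, one may invoke Lemma~\ref{MassConCoagMulti} together with $\sigma_1(y)/y\to\infty$ to obtain the same conclusion.)

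For part (i), write $h_n(t,y):=y^{-\beta}g_n(t,y)$ and $\Sigma_n(t):=\int_0^R\sigma_2(h_n(t,y))\,dy$. Differentiating $\Sigma_n$ in time (legitimate since $g_n\in\mathcal{C}^1([0,\infty);L^1(0,n))$) and inserting \eqref{tcfe} produces
\begin{align*}
\Sigma_n'(t)=\int_0^R y^{-\beta}\sigma_2'(h_n)\,\partial_t g_n\,dy,
\end{align*}
which splits into four pieces. The coagulation loss and fragmentation loss integrals are non-positive since $\sigma_2'\ge 0$ and are discarded, so it remains to control the two gain terms by $C(T,R)+C'(T,R)\,\Sigma_n(t)$, after which Gronwall applied with $\Sigma_n(0)\le \Gamma_2$ from \eqref{convexp2} gives the claim.

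The fragmentation gain equals $(\nu+2)\int_0^R y^{\nu-\beta}\sigma_2'(h_n(y))\bigl(\int_y^n z^{-1-\nu}S_n(z)g_n(z)\,dz\bigr)\,dy$; using $S(z)\le k_2 z^{1+\nu}$ from $(\Lambda_3)$ and Lemma~\ref{G(T)}, the inner $z$-integral is bounded by $k_2\mathcal{G}(T)$. The Young-type inequality \eqref{convexp4} with $p=y^{\nu-\beta}$ and $q=h_n(y)$ then gives $y^{\nu-\beta}\sigma_2'(h_n)\le \sigma_2(y^{\nu-\beta})+\sigma_2(h_n)$; combining $\sigma_2(p)\le S_\gamma(\sigma_2)p^\gamma$ with the condition $\gamma(\nu-\beta)+1>0$ from $(\Lambda_3)$ makes $\int_0^R y^{\gamma(\nu-\beta)}\,dy$ finite, leaving a contribution of the form $C(T,R)+C'(T)\Sigma_n(t)$.

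For the coagulation gain the substitution $y=u+v$ recasts it as $\frac{1}{2}\iint_{u+v\le R}A(u,v)(u+v)^{-\beta}\sigma_2'(h_n(u+v))g_n(u)g_n(v)\,du\,dv$. Using $(u+v)^{-\beta}\le v^{-\beta}$ together with $(\Lambda_1)$ yields $A(u,v)(u+v)^{-\beta}\le k_1(1+u+v)u^{-\beta}v^{-2\beta}$, and then \eqref{convexp4} applied with $p=u^{-\beta}g_n(u)=h_n(u)$ and $q=h_n(u+v)$ converts the factor $u^{-\beta}g_n(u)\sigma_2'(h_n(u+v))$ into $\sigma_2(h_n(u))+\sigma_2(h_n(u+v))$, leaving $(1+R)v^{-2\beta}g_n(v)$ outside. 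The term with $\sigma_2(h_n(u))$ is directly bounded by $k_1(1+R)\mathcal{G}(T)\Sigma_n(t)$ since $\int v^{-2\beta}g_n(v)\,dv\le\mathcal{G}(T)$; the term with $\sigma_2(h_n(u+v))$ is handled by a further change of variable $y=u+v$ and the same moment bound. Assembling the estimates yields a differential inequality $\Sigma_n'(t)\le C_1(T,R)+C_2(T,R)\Sigma_n(t)$ which Gronwall integrates to the claim. The principal technical point is the coagulation gain, where the combined singularity $(uv)^{-\beta}(u+v)^{-\beta}$ must be rearranged into $u^{-\beta}v^{-2\beta}$ so that Young's inequality trades the $u^{-\beta}g_n(u)$ for $\sigma_2(h_n(u))$ while the residual $v^{-2\beta}g_n(v)$ is absorbed by the $L^1_{-2\beta,1}$-bound of Lemma~\ref{G(T)}; any less careful splitting would leave an extra $u^{-\beta}$ factor paired with $\sigma_2(h_n(u))$ that cannot be controlled by $\Sigma_n$ alone.
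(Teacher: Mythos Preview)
Your proposal is correct and follows essentially the same approach as the paper: drop the loss terms, handle the coagulation gain via the rearrangement $(u+v)^{-\beta}(uv)^{-\beta}\le u^{-\beta}v^{-2\beta}$ combined with \eqref{convexp4} and the $L^1_{-2\beta,1}$-bound of Lemma~\ref{G(T)}, handle the fragmentation gain via \eqref{convexp4} together with $\sigma_2(p)\le S_\gamma(\sigma_2)p^\gamma$ and the integrability condition $\gamma(\nu-\beta)+1>0$ from $(\Lambda_3)$, and close with Gronwall; part~(ii) is likewise the same Markov-type tail estimate from the uniform first-moment bound. The only cosmetic difference is that the paper splits the fragmentation $z$-integral at $z=R$ before estimating, whereas you bound the inner $z$-integral directly---both lead to the same differential inequality.
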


\begin{proof} We take $h_n(t, y):= y^{-\beta}  g_n(t, y)$ and $n \ge R$. Next, by using the Leibniz's integral rule and \eqref{tcfe}, we have
%
\begin{align}\label{Equintp2}
\frac{d}{dt}\int_0^R \sigma_2(h_n(t, y))dy \le &\frac{1}{2} \int_0^R \int_0^y \sigma_2'(h_n(t, y))  y^{-\beta} A_n^{\zeta}(y-z, z)g_n(t, y-z)g_n(t, z)dzdy\nonumber\\
&+\int_0^R \int_y^n \sigma_2'(h_n(t, y))  y^{-\beta}  b(y|z) S_n(z) g_n(t, z)dzdy.
\end{align}
Changing the order of integration by using Fubini's theorem and simplifying it further, by substituting $y-z=y'$ and $z=z'$, as
\begin{align}\label{Equintp3}
\frac{d}{dt}\int_0^R \sigma_2(h_n(t, y))dy \leq &\frac{1}{2} \int_0^R \int_0^{R-z}\sigma_2'(h_n(t, y+z)) (y+z)^{-\beta} A_n^{\zeta}(y, z)g_n(t, y)g_n(t, z)dydz\nonumber\\
&+\int_0^R \int_0^z  y^{-\beta} b(y|z)S_n(z) \sigma_2'(h_n(t, y))g_n(t, z)dydz\nonumber\\
&+\int_R^n \int_0^R  y^{-\beta} b(y|z)S_n(z) \sigma_2'(h_n(t, y)) g_n(t, z)dydz.
\end{align}
Now, we estimate each term on the right-hand side, individually. The first term on the right hand side of \eqref{Equintp3} can be evaluated, by using $(\Lambda_1)$, \eqref{convexp4} and Lemma \ref{G(T)}, as
\begin{align}\label{est1}
\frac{1}{2} \int_0^R  \int_0^{R-z} & \sigma_2'(h_n(t, y+z)) (y+z)^{-\beta}  A_n^{\zeta}(y, z)g_n(t, y)g_n(t, z) dy dz\nonumber\\
\le & \frac{1}{2}k_1(1+R) \int_0^R \int_0^{R-z}  y^{-2\beta} z^{-\beta}  \sigma_2'(h_n(t, y+z))g_n(t, y)g_n(t, z) dydz\nonumber\\
\le & \frac{1}{2}k_1(1+R)  \int_0^R \int_0^{R-z}  y^{-2\beta} [\sigma_2(h_n(t, y+z))+\sigma_2 (h_n(t, z))] g_n(t, y)dydz\nonumber\\
\le &  C_1 (T, R) \int_0^R\sigma_2 (h_n(t, z))dz,
\end{align}
where $C_1(T, R) := k_1(1+R) \mathcal{G}(T)$. Next, the second term can be estimated, by using $(\Lambda_3)$, \eqref{convexp4}, the definition of $\mathcal{C}_{VP, \infty}$ and Lemma \ref{G(T)}, as
\begin{align}\label{est2}
\int_0^R \int_0^z &  y^{-\beta} b(y|z)S_n(z) \sigma_2'(h_n(t, y))g_n(t, z)dydz \nonumber\\
\le &  k_2 (\nu+2) \int_0^R \int_0^z    [ \sigma_2(h_n(t, y)) +\sigma_2(y^{\nu -\beta})] g_n(t, z)dydz \nonumber\\
 \le &  k_2 (\nu+2) \bigg[ \mathcal{G}(T) \int_0^R    \sigma_2(h_n(t, y)) dy  +  \int_0^R \int_0^z  (y^{\nu -\beta} )^{\gamma}  \frac{ \sigma_2(y^{\nu -\beta}) }{(y^{\nu -\beta} )^{\gamma}} g_n(t, z)dydz \bigg] \nonumber\\
 \le &  k_2 (\nu+2) \mathcal{G}(T) \int_0^R    \sigma_2(h_n(t, y)) dy  + k_2 \frac{(\nu+2)}{{ \gamma \nu -\gamma \beta+1} } S_{\gamma} \int_0^R   z^{\gamma \nu -\gamma \beta+1} g_n(t, z)dz \nonumber\\
  \le &  k_2 (\nu+2) \mathcal{G}(T) \int_0^R    \sigma_2(h_n(t, y)) dy  + k_2 \frac{(\nu+2)}{{\gamma \nu -\gamma \beta+1} } S_{\gamma} \mathcal{G}(T).
\end{align}
Finally, we estimate the third term, by using $(\Lambda_2)$, $(\Lambda_3)$, \eqref{convexp4}, the definition of $\mathcal{C}_{VP, \infty}$ and Lemma \ref{G(T)}, as
\begin{align}\label{est3}
\int_R^n \int_0^R &  y^{-\beta} b(y|z)S_n(z) \sigma_2'(h_n(t, y))g_n(t, z)dydz \nonumber\\
\le & k_2 (\nu+2) \int_R^n \int_0^R  y^{-\beta} y^{\nu}   \sigma_2'(h_n(t, y))g_n(t, z)dydz \nonumber\\
\le & k_2 (\nu+2)  \mathcal{G}(T)  \bigg[ \int_0^R   \sigma_2(h_n(t, y))  dy +   \int_0^R     \sigma_2(y^{\nu -\beta}) dy \bigg] \nonumber\\
\le & k_2 (\nu+2)  \mathcal{G}(T)  \bigg[ \int_0^R   \sigma_2(h_n(t, y))  dy + \frac{ S_{\gamma} }{ \gamma \nu -\gamma \beta+1 } R^{\gamma \nu -\gamma \beta+1} \bigg].
\end{align}
Collecting all above estimates in (\ref{est1}), (\ref{est2}) and (\ref{est3}), and inserting them into \eqref{Equintp3}, we have
\begin{align}
\frac{d}{dt}\int_0^R \sigma_2(h_n(t, y))dy\le  C_2(T, R)\int_0^R\sigma_2(h_n(t, z))dz+C_3(T, R),
\end{align}
where $C_2(T, R):= C_1(T, R)+2k_2(\nu +2) \mathcal{G}(T)$ and $C_3(T, R):= k_2 \frac{(\nu +2)}{(\gamma \nu-\gamma \beta+1)} S_\gamma [ \mathcal{G}(T)+R^{\gamma \nu -\gamma \beta+1}] $.
Then applying the Gronwall's inequality, we obtain
\begin{align}
\int_0^R \sigma_2(y^{-\beta}g_n(t, y))dy \le C(T, R),
\end{align}
where $C(T, R)$ is a constant depending on $T$ and $R$. This completes the proof of the Lemma \ref{last} $(i)$. One can infer the second part of Lemma \ref{last} by using \eqref{tmc} and $(\Lambda_4)$. This completes the proof of Lemma \ref{last}.
\end{proof}

\subsection{Equi-continuity w.r.t. time in weak sense}
\begin{lem}\label{Equicontinuityweaksense}
Assume $(\Lambda_1)$--$(\Lambda_4)$ hold. For any $T>0$ and $R>1$, there is a positive constant $C_5(T, R)$ depending on $T$ and $R$ such that
\begin{align*}
\bigg|\int_0^{\infty}  y^{-\beta} \Psi(y) [g_n(t, y)-g_n(s, y)]dy \bigg|\le C_5(T, R)(t-s),
\end{align*}
for every $n > 1$, $0 \le s \le t \le T$ and $\Psi \in L^{\infty}(0, \infty).$
\end{lem}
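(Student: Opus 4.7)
The plan is to apply the weak formulation (\ref{nctp3}) with the test function $\omega(y) = y^{-\beta}\Psi(y)\chi_{(0,n)}(y)$ between times $s$ and $t$, and then estimate each resulting integral independently of $n$ and of $s,t\in[0,T]$, extracting the crucial factor $(t-s)$ from the outer time integration. Denoting the resulting identity as $\int_0^{\infty} y^{-\beta}\Psi(y)[g_n(t,y)-g_n(s,y)]dy = I_C - I_F$, with $I_C$ the double coagulation integral over $[s,t]\times(0,n)^2$ and $I_F$ the fragmentation integral over $[s,t]\times(0,n)$, the strategy is to control pointwise the weights $G_{\omega,n}^{\zeta}$ and $H_{\omega}$ and then reduce everything to the moment bound supplied by Lemma \ref{G(T)}.

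For $I_C$, the elementary bound $|\omega(y)|\le\|\Psi\|_\infty y^{-\beta}$ together with $(y+z)^{-\beta}\le y^{-\beta}$ yields
\begin{align*}
|G_{\omega,n}^{\zeta}(y,z)|\le \|\Psi\|_\infty\bigl[(y+z)^{-\beta}+y^{-\beta}+z^{-\beta}\bigr]\le 3\|\Psi\|_\infty(y^{-\beta}+z^{-\beta}).
\end{align*}
Multiplying by $A(y,z)\le k_1(1+y+z)(yz)^{-\beta}$ from $(\Lambda_1)$ and expanding produces a finite sum of separable terms $y^a z^b g_n(s',y)g_n(s',z)$ with exponents $a,b\in\{-2\beta,-\beta,1-2\beta,1-\beta\}$. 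Each such exponent lies in the interpolation range $[-2\beta,1]$ (using $\beta<1/2$), so the splitting $y^a\le y^{-2\beta}\chi_{(0,1)}(y)+y\,\chi_{[1,\infty)}(y)$ together with Lemma \ref{G(T)} gives $\int_0^n y^a g_n(s',y)dy\le 2\mathcal{G}(T)$, whence $|I_C|\le C_1(T)\|\Psi\|_\infty(t-s)$.

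For $I_F$, inserting $b(y|z)=(\nu+2)y^{\nu}/z^{1+\nu}$ into (\ref{H Omega}) and computing directly gives
\begin{align*}
|H_{\omega}(z)|\le \|\Psi\|_\infty\Bigl(1+\frac{\nu+2}{\nu-\beta+1}\Bigr)z^{-\beta},
\end{align*}
the integral $\int_0^z y^{-\beta}b(y|z)dy$ converging because $\nu-\beta+1>\nu-2\beta+1>0$ (the latter being forced by finiteness of the constant $c_1$ in $(\Lambda_2)$). Combining with $S_n(z)\le k_2 z^{1+\nu}$ from $(\Lambda_3)$ reduces the estimate of $I_F$ to the single moment $\int_0^n z^{1+\nu-\beta}g_n(s',z)dz$; since $-2\beta\le 1+\nu-\beta\le 1$ (using $\nu>-1$, $\nu\le 0$, $\beta\ge 0$), the same splitting argument bounds it by $2\mathcal{G}(T)$, so $|I_F|\le C_2(T)\|\Psi\|_\infty(t-s)$.

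Adding the two estimates produces the desired inequality with $C_5(T,R)=\bigl(C_1(T)+C_2(T)\bigr)\|\Psi\|_\infty$. The only step requiring genuine care is verifying that every moment exponent arising in the coagulation and fragmentation integrands lies in the interpolation range $[-2\beta,1]$ covered by Lemma \ref{G(T)}; this is exactly the point at which the assumptions $\beta<1/2$, $-1<\nu\le 0$, and the constraint $\nu>2\beta-1$ forced by $(\Lambda_2)$ all come into play simultaneously, and should be the main (mild) obstacle.
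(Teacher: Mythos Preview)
Your argument is correct and, in fact, cleaner than the paper's. Both proofs ultimately reduce to the moment bound of Lemma~\ref{G(T)}, but the routes differ.

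The paper restricts the $y$-integration to $(0,R)$, writes $\int_0^R y^{-\beta}\Psi(y)|g_n(t,y)-g_n(s,y)|\,dy \le \|\Psi\|_\infty\int_s^t\int_0^R y^{-\beta}|\partial_\tau g_n|\,dy\,d\tau$, and then estimates each of the four terms of~(\ref{tcfe}) (coagulation gain, coagulation loss, fragmentation gain, fragmentation loss) separately. Because gain and loss are handled individually, factors of $(1+R)$ appear (e.g.\ from $1+y+z\le 1+R$ in the gain term), which is why the paper truncates at $R$ and then treats the tail $\int_R^\infty$ via Lemma~\ref{last}(ii), ending with $C_5(T,R)(t-s)+2\|\Psi\|_\infty\epsilon$ rather than the pure $(t-s)$ bound stated.

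You instead apply the weak formulation~(\ref{nctp3}) on the full interval $(0,n)$ with $\omega(y)=y^{-\beta}\Psi(y)\chi_{(0,n)}(y)$, which packages gain and loss together into $G_{\omega,n}^{\zeta}$ and $H_\omega$. The resulting pointwise bounds $|G_{\omega,n}^{\zeta}|\le 3\|\Psi\|_\infty(y^{-\beta}+z^{-\beta})$ and $|H_\omega(z)|\le C\|\Psi\|_\infty z^{-\beta}$ carry no size cutoff, and the key observation that every exponent arising in the products lies in $[-2\beta,1]$ is exactly what allows you to close via Lemma~\ref{G(T)} without ever introducing $R$. This yields a constant independent of $R$ and removes the need for the tail estimate from Lemma~\ref{last}(ii); in particular it proves the lemma as literally stated, whereas the paper's own argument strictly speaking only gives the weaker $C_5(T,R)(t-s)+\epsilon$ form (which is of course still sufficient for the Arzel\`a--Ascoli step downstream). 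Your check that $\nu-\beta+1>0$ follows from the implicit constraint $\nu>2\beta-1$ in $(\Lambda_2)$ is the one subtle point, and you handle it correctly.
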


\begin{proof}
Let $T>0$ and $R>1$. For $n > 1$, $0 \le s \le t \le T$ and $\Psi \in L^{\infty}(0, \infty)$, we evaluate the following integral as
\begin{align}\label{Equicontinuity1}
\int_0^R y^{-\beta} \Psi(y) & |g_n(t, y)-g_n(s, y)|dy\nonumber\\
 \le & \| \Psi \|_{L^{\infty}(0, \infty)} \int_s^t \int_0^R y^{-\beta} \bigg|  \frac{\partial g_n}{\partial t}(\tau, y) \bigg| dy d\tau \nonumber\\
 \le  & \| \Psi \|_{L^{\infty}(0, \infty)} \int_s^t  \bigg[   \frac{1}{2} \int_0^R  \int_{0}^{y} y^{-\beta} A_n^{\zeta}(y-z, z) g_n(\tau, y-z) g_n(\tau, z)dzdy \nonumber\\
 & + \int_0^R  \int_{0}^{n-y} y^{-\beta} A_n^{\zeta}(y, z) g_n (\tau, y) g_n(\tau, z)dz dy \nonumber\\
  &+\int_0^R  \int_y^{n} y^{-\beta} b(y|z)S_n(z) g_n(\tau, z)dz dy+ \int_0^R  y^{-\beta} S_n(y) g_n (\tau, y) dy \bigg] d\tau.
\end{align}
Next, we evaluate each integral on the right-hand side to \eqref{Equicontinuity1} separately. First, we evaluate the first integral, by using Fubini's theorem, ($\Gamma_1$) and Lemma \ref{G(T)} as
\begin{align}\label{Equicontinuity2}
 \frac{1}{2} \int_s^t \int_0^R  \int_{0}^{y} & y^{-\beta} A_n^{\zeta}(y-z, z) g_n(\tau, y-z) g_n(\tau, z)dzdy d\tau \nonumber\\
\le  &  \frac{1}{2}k_1 \int_s^t \int_0^R  \int_{0}^{R-z} (y+z)^{-\beta} \frac{(1+y+z)}{(yz)^{\beta}} g_n(\tau, y) g_n(\tau, z) dy dz d\tau \nonumber\\
\le  &  \frac{1}{2}k_1 (1+R)\int_s^t \int_0^R  \int_{0}^R y^{-2\beta} z^{-\beta} g_n(\tau, y) g_n(\tau, z) dy dz d\tau \nonumber\\
\le &   \frac{1}{2}k_1 (1+R) \mathcal{G}^2(T) (t-s).
\end{align}
Similarly, by applying ($\Gamma_1$) and Lemma \ref{G(T)}, the second integral can be estimated, as
\begin{align}\label{Equicontinuity3}
 \int_s^t \int_0^R & \int_{0}^{n-y}  y^{-\beta} A^{\zeta}_n(y, z) g_n(\tau, y) g_n(\tau, z) dz dy d\tau \nonumber\\
\le & k_1  \int_s^t \int_0^R  \int_{0}^{n}  y^{-\beta}  \frac{(1+R+z)}{(yz)^{\beta}}  g_n(\tau, y) g_n(\tau, z) dz dy d\tau \nonumber\\
\le & k_1 \mathcal{G}(T)  \int_s^t  \int_{0}^{n}  (1+R+z) z^{-\beta}  g_n(\tau, z)dz d\tau
\le  2 k_1 (1+R) \mathcal{G}^2(T)(t-s).
\end{align}
We evaluate the third integral, by using Fubibi's theorem, ($\Gamma_2$), ($\Gamma_3$), and Lemma \ref{G(T)}, as
\begin{align}\label{Equicontinuity4}
 \int_s^t \int_0^R  \int_y^{n} & y^{-\beta} b(y|z)S_n(z) g_n(\tau, z)dz dy d\tau \nonumber\\
 \le & \int_s^t \int_0^n  \int_0^{z} y^{-\beta} b(y|z) S_n(z) g_n(\tau, z)dy dz d\tau \nonumber\\
  \le & c_1 k_2 \int_s^t \int_0^n  z^{1+\nu-\beta}  g_n(\tau, z) dz d\tau \le c_1 k_2 \mathcal{G}(T)(t-s).
\end{align}
Finally, the last term can be estimated, by applying ($\Gamma_3$), and Lemma \ref{G(T)}, as
\begin{align}\label{Equicontinuity5}
 \int_s^t \int_0^R  & y^{-\beta} S_n(y) g_n (\tau, y) dy d\tau
  \le  k_2 \int_s^t \int_0^R  y^{1+\nu-\beta}  g_n(\tau, y) dy d\tau \le  k_2 \mathcal{G}(T)(t-s).
\end{align}
Inserting \eqref{Equicontinuity2}, \eqref{Equicontinuity3}, \eqref{Equicontinuity4} and \eqref{Equicontinuity5} into \eqref{Equicontinuity1}, we have
\begin{align}\label{Equicontinuity6}
\int_0^R y^{-\beta} & \Psi(y)  |g_n(t, y)-g_n(s, y)|dy \le  C_5(T, R) (t-s),
\end{align}
where
\begin{align*}
C_5(T, R)=\| \Psi \|_{L^{\infty}(0, \infty)} \bigg[ \frac{1}{2}k_1 (1+R) \mathcal{G}(T) + 2 k_1 (1+R) \mathcal{G}(T)+ c_1 k_2 + k_2 \bigg]\mathcal{G}(T).
\end{align*}
Now for arbitrary $\epsilon >0$, we evaluate the following integral, by using \eqref{Equicontinuity6} and Lemma \ref{last}, as
\begin{align}\label{Equicontinuity7}
\bigg|\int_0^{\infty} & y^{-\beta} \Psi(y) [g_n(t, y)-g_n(s, y)]dy \bigg| \nonumber \\
\le & \bigg|\int_0^{R}  y^{-\beta} \Psi(y) [g_n(t, y)-g_n(s, y)]dy \bigg| + \bigg|\int_R^{\infty}  y^{-\beta} \Psi(y) [g_n(t, y)-g_n(s, y)]dy \bigg| \nonumber\\
\le & C_5(T, R) (t-s)+2 \| \Psi \|_{L^{\infty}(0, \infty)} \epsilon.
\end{align}

This completes the proof of Lemma \ref{Equicontinuityweaksense}.
\end{proof}

We are now in a position to complete the proof of Theorem \ref{TheoremCmfe} in the next subsection.

\subsection{Convergence of integrals}
\emph{Proof of Theorem} \ref{TheoremCmfe}:\
From de la Vall\`{e}e Poussin theorem, Lemma (\ref{G(T)})--(\ref{last}), and then using Dunford-Pettis theorem and a variant of the Arzel\`{a}-Ascoli theorem, see \cite{Vrabie:1995}, we conclude that $(g_n)$ is relatively compact in $\mathcal{C}([0, T]_w; L_{-\beta}^1(\mathds{R}_{>0}))$ for each $T>0$. There is thus a subsequence of $(g_n)$ and a nonnegative function $g\in \mathcal{C}([0, T]_w; L_{-\beta}^1(\mathds{R}_{>0}))$ such that
\begin{align}\label{weakconvergence1}
 g_n \to g \ \ \ \text{in} \ \ \mathcal{C}([0, T]_w: L^1(\mathds{R}_{>0}); \zeta^{-\beta} d \zeta)
\end{align}
for each $T>0$. \\

Next, we can improve the weak convergence \eqref{weakconvergence1}, by applying Lemma (\ref{G(T)}), (\ref{C(T)}) and \eqref{weakconvergence1}, as
\begin{align}\label{weakconvergence2}
 g_n \to g\ \ {in} \ \ \mathcal{C}([0,T]_w: L^1(\mathds{R}_{>0}); (\zeta^{-\beta} +\zeta) d \zeta).
\end{align}

In order to show that $g$ is actually a solution to \eqref{Cmfe}--\eqref{Initialdata} in the sense of \eqref{definition}, it remains to verify all the truncated integrals in \eqref{tcfe} convergence weakly to the original integrals in \eqref{Cmfe}. This is now a standard procedure to prove this convergence of integrals, see \cite{Camejo:2015, Barik:2017Anote, Barik:2018Mass, Giri:2012, Laurencot:2002, Laurencot:2015, Laurencot:2018, Stewart:1989}. Thus, $g$ is a weak solution to \eqref{Cmfe}--\eqref{Initialdata}.\\

 Finally to complete the proof of Theorem \ref{TheoremCmfe}, it is required to show that $g$ is a mass-conserving solution to \eqref{Cmfe}--\eqref{Initialdata}. For the case of non-conservative one ($\theta=0$), one can be  easily proved as similar to \cite{Filbet:2004, Barik:2017Anote} and on the other hand, for conservative case ($\theta=0$), we infer from \eqref{weakconvergence2} and \eqref{tmc}, which completes the proof of Theorem \ref{TheoremCmfe}.

\section*{Acknowledgments}
The author would like to thank Professor Swadhin Pattanayak, former director of Institute of Mathematics and Applications, Bhubaneswer, for his valuable suggestions and comments.
\bibliographystyle{plain}

\begin{thebibliography}{10}
\small{
\bibitem{Aldous:1999} D. J. Aldous, Deterministic and stochastic model for coalescence (aggregation and coagulation): A review of the mean-field theory for probabilistics, \textit{Bernouli} \textbf{5} 3--48, 1999.


  \bibitem{Barik:2017Anote} P. K. Barik and A. K. Giri, A note on mass-conserving solutions to the coagulation and fragmentation equation by using
non-conservative approximation, \textit{Kinet. Relat. Models} \textbf{11} 1125--1138, 2018.


\bibitem{Barik:2018Mass} P. K. Barik, A. K. Giri and P. Lauren\c{c}ot, Mass-conserving solutions to Smoluchowski coagulation equation with singular kernel, Proceedings of the Royal Society Edinburg Section A: Mathematics, to appear.


\bibitem{Bourgade:2008} J. P. Bourgade and F. Filbet, Convergence of a finite volume scheme for coagulation-fragmentation equations,
\textit{Math. Comp.} \textbf{77} 851--882, 2008.


\bibitem{Camejo:2015} C. C. Camejo and G. Warnecke, The singular kernel coagulation equation with multifragmentation, \textit{Math. Methods Appl. Sci.} \textbf{38} 2953--2973, 2015.

\bibitem{Clark:1999} J. M. C. Clark and V. Katsouros, Stably coalescent stochastic froths, \textit{Adv. Appl. Probab.} \textbf{31} 199--219, 1999.


\bibitem{Dubovskii:1996} P. B. Dubovskii and  I. W. Stewart, Existence, Uniqueness and Mass Conservation for the coagulation-fragmentation equation, \textit{Math. Methods Appl. Sci.} \textbf{19} 571--591, 1996.


\bibitem{Escobedo:2003} M. Escobedo, P. Lauren\c{c}ot, S. Mischler and B. Perthame, Gelation and mass conservation in coagulation-fragmentation models,
\textit{Journal of Differential Equations} \textbf{195} 143--174, 2003.

\bibitem{Filbet:2004} F. Filbet and P. Lauren\c{c}ot, Mass-conserving solutions and non-conservative approximation to the Smoluchowski coagulation equation, \textit{ Archiv der Mathematik} \textbf{83} 558--567, 2004.

\bibitem{Filbet:2004II} F. Filbet and P. Lauren\c{c}ot, Numerical simulation of the Smoluchowski coagulation equation,
\textit{SIAM J. Sci. Comput.} \textbf{25} 2004--2028, 2004.

\bibitem{Giri:2013} A. K. Giri, On the uniqueness for the coagulation and multiple fragmentation equation, \textit{Kinet. Relat. Models} \textbf{6} 589--599, 2013.


\bibitem{Giri:2011Thecontinuous} A. K. Giri, J. Kumar and G. Warnecke, The continuous coagulation equation with multiple fragmentation,  \textit{Journal of Mathematical Analysis and Applications} \textbf{374} 71--87, 2011.


\bibitem{Giri:2012} A. K. Giri, P. Lauren\c{c}ot and G. Warnecke, Weak solutions to the continuous coagulation with multiple fragmentation, \textit{Nonlinear Analysis.} \textbf{75} 2199--2208, 2012.


\bibitem{Kapur:1972} P. C. Kapur, Kinetics of granulation by non-random coalescence mechanism, \textit{Chemical Engineering Science} \textbf{27} 1863--1869, 1972.


\bibitem{Laurencot:2018} P. Lauren\c{c}ot, Mass-conserving solutions to coagulation-fragmentation equations with non-integrable fragment distribution function, \textit{Quart. Appl. Math.}  \textbf{76} 767--785, 2018.


\bibitem{Laurencot:2015} P. Lauren\c{c}ot, Weak compactness techniques and coagulation equations, \textit{Evolutionary Equations with Applications in Natural Sciences}, J.~Banasiak \& M.~Mokhtar-Kharroubi (eds.), Lecture Notes Math. \textbf{2126} 199--253, 2015.


\bibitem{Laurencot:2002} P. Lauren\c{c}ot and S. Mischler, From the discrete to the continuous coagulation-fragmentation euations, \textit{Proc. of the Royal Soc. of Edinburgh} \textbf{132A} 1219--1248, 2002.


\bibitem{Leyvraz:1981} F. Leyvraz and H. R. Tschudi,
Singularities in the kinetics of coagulation processes,
\textit{J. Phys. A} \textbf{14} 3389--3405, 1981.


\bibitem{McLaughlin:1997} D. J. McLaughlin, W. Lamb and A. C. McBride, An existence and uniqueness result for a coagulation and multiple-fragmentation equation, \textit{SIAM J. Math. Anal.} \textbf{28} 1173--1190, 1997.


\bibitem{Melzak:1957} Z. A. Melzak, A scalar transport equation, \textit{Trans. Amer. Math. Soc.} \textbf{85} 547--560, 1957.


\bibitem{Norris:1999} J. R. Norris, Smoluchowski's coagulation equation: uniqueness, non-uniqueness and hydrodynamic limit for the stochastic coalescent, \textit{Ann. Appl. Probab.} \textbf{9} 78--109, 1999.


\bibitem{Stewart:1989} I. W. Stewart, A global existence theorem for the general coagulation-fragmentation equation with unbounded kernels, \textit{Math. Methods Appl. Sci.} \textbf{11} 627--648, 1989.




\bibitem{Vrabie:1995} I. I. Vrabie, \textit{Compactness Methods for Nonlinear Evolutions}, 2nd edition, Pitman Monogr. Surveys Pure Appl. Math., \textbf{75}, Longman, 1995.

 }
\end{thebibliography}

\end{document}